 \newtheorem{theorem}{Theorem}
 \newtheorem{corollary}[theorem]{Corollary}
 \title{Calculation of norms of some special elements of cyclotomic fields.}
 \author{Alexandre Aksenov}
 \date{}
\newcommand{\Q}{\mathbb Q}
\begin{document}
\maketitle
\begin{abstract}
In this article we prove that $1-\zeta+\zeta^2$ is a unit in the ring of integers of the cyclotomic field where $\zeta$ is a primitive $n$-th root of unity and $n$ is coprime to $2$ and $3$. We also prove that for prime $n$, $N_{\Q(\zeta)/\Q}(1-\zeta-\zeta^2)=L(p)$ the $p$-th Lucas number thus completing the study of norms of quadratic polynomials in $\zeta$ that only have coefficients equal to $1$ or $-1$ and both numbers appear.  
\end{abstract}

\vspace{0.5cm}

\bf Keywords: \rm Cyclotomic fields, Lucas numbers.

\vspace{0.5cm}

\section*{1. Introduction.}
As shown in \cite{Aksenov}, the study of the Newman phenomenon in the multiplicative sequences leads naturally to the study of the norms (over $\Q$ or over a subfield of small degree) of numbers of the form $r(\zeta)$ where $\zeta$ is a primitive $p$-th root of unity ($p$ being prime) and $r$ is a polynomial with coefficients $1$ or $-1$, the free term being $1$; the only relevant polynomials for the study of the Newman phenomenon are those that have at least one $-1$ coefficient. We are looking for results that are valid for a fixed $r$ and an arbitrary prime  $p$ (the conditions that can be put on $p$ are: the prime $p$ allows the existence of the desired subfield and $p$ is big enough).

Here are some results of this kind: for any odd prime $p$ we get $N_{\Q(p)/\Q}(1-\zeta)=p$ (see \cite{NumTh}) and 
\begin{equation}
\label{N++-}
N_{\Q(p)/\Q}(1+\zeta-\zeta^2)=L(p) \text{ the $p$-th Lucas number .}
\end{equation}
(see  \cite{Aksenov}). Moreover, if $p\equiv1$ mod $4$, then $N_{\Q(\zeta)/Q(\sqrt{p})} (1-\zeta)=\sqrt{p}\epsilon^{\pm h}$ where $\epsilon$ is the fundamental unit of the ring of integers of $Q(\sqrt{p})$ and $h$ is its class number; if $p\equiv-1$ mod $4$ we get $N_{\Q(\zeta)/Q(\sqrt{-p})}=\epsilon i\sqrt{p} $ where $\epsilon\in\{1,-1\}$ is a sign defined as follows: suppose $\zeta=e^{\frac{2i\pi k}{p}}$, then $\epsilon=\genfrac{(}{)}{}{}{k}{p}(-1)^{\frac{h+1}{2}}$ where $h$ is the class number of $\Q(\sqrt{-p})$ (see \cite{BorevichShafarevich}).

In this article we are going to study the values of $N_{\Q(p)/\Q}(r(\zeta))$ where $r$ is one of the remaining quadratic polynomials, namely $r_1(\zeta)=1-\zeta+\zeta^2$ or $r_2(\zeta)=1-\zeta-\zeta^2$.

\section*{2. Main results.}
The first result concerns the polynomial $r_1$ and it is the following:
\begin{theorem}
Let $n$ be an integer bigger than $4$ and not multiple of $2$ or $3$, and let $\zeta=e^\frac{2i\pi}n$. Then
$$\prod_{k=1}^{n-1}(1-\zeta^k+\zeta^{2k})=1. $$
Therefore, $1-\zeta+\zeta^2$ is a unit in the ring of integers of $\Q(\zeta)$. 
\end{theorem}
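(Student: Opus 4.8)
The plan is to exploit the factorization $1-X+X^{2}=\dfrac{X^{3}+1}{X+1}$, which holds because $X^{3}+1=(X+1)(X^{2}-X+1)$. Since $n$ is odd, $-1$ is not an $n$-th root of unity, so $\zeta^{k}+1\neq 0$ for every $k$ and we may legitimately substitute $X=\zeta^{k}$. This rewrites the product as
$$\prod_{k=1}^{n-1}\bigl(1-\zeta^{k}+\zeta^{2k}\bigr)=\prod_{k=1}^{n-1}\frac{\zeta^{3k}+1}{\zeta^{k}+1}=\frac{\prod_{k=1}^{n-1}(\zeta^{3k}+1)}{\prod_{k=1}^{n-1}(\zeta^{k}+1)}.$$

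The central step is a reindexing argument. Because $\gcd(3,n)=1$, multiplication by $3$ is a bijection of $\Z/n\Z$ fixing $0$, hence it permutes $\{1,2,\dots,n-1\}$ modulo $n$. As $\zeta^{3k}$ depends only on $3k \bmod n$, the multiset $\{\zeta^{3k}:1\le k\le n-1\}$ equals $\{\zeta^{j}:1\le j\le n-1\}$, so the numerator and denominator above coincide and their ratio is $1$. This establishes the displayed identity, and it is exactly here that the two hypotheses enter: $2\nmid n$ guarantees a nonvanishing denominator, and $3\nmid n$ makes the reindexing a bijection. (An equivalent route writes $1-X+X^{2}=(X-\alpha)(X-\bar\alpha)$ with $\alpha$ a primitive $6$th root of unity and evaluates $\prod_{k=1}^{n-1}(X-\zeta^{k})=\tfrac{X^{n}-1}{X-1}$ at $X=\alpha,\bar\alpha$, using that $\alpha^{n}\in\{\alpha,\bar\alpha\}$ according to $n\bmod 6$; the reindexing version is cleaner, so I would present that one.)

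For the unit consequence I would group the $n$-th roots of unity other than $1$ by their exact order: for each divisor $d\mid n$ with $d>1$, the primitive $d$-th roots contribute $N_{\Q(\zeta_{d})/\Q}(1-\zeta_{d}+\zeta_{d}^{2})$ to the full product. Thus the product over all $k$ factors as $\prod_{d\mid n,\;d>1}N_{\Q(\zeta_{d})/\Q}(1-\zeta_{d}+\zeta_{d}^{2})=1$. Each factor is a nonzero rational integer: it is the norm of an algebraic integer, and it is nonzero because $1-\omega+\omega^{2}=0$ forces $\omega$ to be a primitive $6$th root of unity, whose order cannot divide the $6$-coprime integer $n$. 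A product of nonzero integers equal to $1$ forces each of them to be $\pm1$; taking $d=n$ yields $N_{\Q(\zeta)/\Q}(1-\zeta+\zeta^{2})=\pm1$, so $1-\zeta+\zeta^{2}$ is a unit.

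The identity itself I expect to be essentially immediate once the factorization is spotted; the genuine subtlety, and the part I would be most careful about, is the last paragraph. For composite $n$ the product over all $k$ from $1$ to $n-1$ is \emph{not} the norm $N_{\Q(\zeta)/\Q}(1-\zeta+\zeta^{2})$ but rather a product of norms over all subcyclotomic fields, so one cannot read off the unit statement directly; the grouping-by-order decomposition together with the nonvanishing check is what bridges that gap.
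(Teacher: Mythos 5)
Your proof is correct, and the key identity is the paper's own identity in algebraic rather than trigonometric dress: the paper writes $1-\zeta^k+\zeta^{2k}=\zeta^k\bigl(2\cos\tfrac{2\pi k}{n}-1\bigr)=\zeta^k\frac{\cos(3\pi k/n)}{\cos(\pi k/n)}$, and since $\zeta^{3k}+1=2e^{3i\pi k/n}\cos\tfrac{3\pi k}{n}$ and $\zeta^{k}+1=2e^{i\pi k/n}\cos\tfrac{\pi k}{n}$, your quotient $(\zeta^{3k}+1)/(\zeta^{k}+1)$ is exactly the same factor; both proofs then conclude by the same reindexing $k\mapsto 3k$, legitimate because $\gcd(3,n)=1$. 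Your formulation is the cleaner execution: your multiset $\{\zeta^{3k}\}$ is permuted modulo $n$, whereas the paper's cosines $\cos(3\pi k/n)$ must be tracked modulo $2n$ together with the symmetry $\cos\bigl(\pi(2n-j)/n\bigr)=\cos(\pi j/n)$, a verification the paper compresses into ``it can be checked''. The more substantial difference is your final paragraph. The paper passes from the product identity to ``$1-\zeta+\zeta^2$ is a unit'' with a bare ``Therefore'', which is immediate only when $n$ is prime; for composite $n$ the product over $1\le k\le n-1$ is not the norm $N_{\Q(\zeta)/\Q}(1-\zeta+\zeta^2)$, so there is a real gap there. Your decomposition of that product as $\prod_{d\mid n,\,d>1}N_{\Q(\zeta_d)/\Q}(1-\zeta_d+\zeta_d^2)$, with the observation that each factor is a nonzero rational integer (nonzero since $1-\omega+\omega^2=0$ forces $\omega$ to be a primitive sixth root of unity, impossible when $\gcd(n,6)=1$), hence that each factor is $\pm1$, is precisely the missing bridge, and it is a genuine improvement on the proof as printed.
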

\begin{proof}
For each $k\in[1,n-1]$ we get:
$$1-\zeta^k+\zeta^{2k}=\zeta^k\left(\zeta^k+\zeta^{-k}-1\right)=\zeta^k(2\cos\frac{2\pi k}n-1)=\zeta^k \frac{\cos\frac{3\pi k}n}{\cos\frac{\pi k}n}. $$

The product of terms $\zeta^k$ is one, and it can be checked that the products $\prod\limits_{k=1}^{n-1}\cos\frac{3\pi k}n$ and $\prod\limits_{k=1}^{n-1}\cos\frac{\pi k}n$ only differ by permutation of factors.
\end{proof}

For prime numbers $n$ the method used to prove the formula \ref{N++-} makes the theorem \arabic{theorem} is equivalent to the following
\begin{corollary}
Let $p$ be a prime, $p\geqslant 5$. Then the number of ways of putting an even nonzero number of dominos on the circle of length $p$ is equal to the number of ways of putting an odd number of dominos on that circle.
\end{corollary}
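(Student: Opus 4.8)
The plan is to reduce the statement to a single alternating identity and then to recognise that identity as the content of Theorem~1. A placement of dominoes on the circle of length $p$ is the same thing as a matching of the cycle graph $C_p$; let $a_j$ denote the number of such placements that use exactly $j$ dominoes. The proof of \ref{N++-} in \cite{Aksenov} identifies the total (unsigned) count $\sum_{j\ge 0}a_j$ with $N_{\Q(p)/\Q}(1+\zeta-\zeta^2)=L(p)$. Writing $E=\sum_{j\text{ even}}a_j$ and $O=\sum_{j\text{ odd}}a_j$ and using that the empty placement is the unique one with $j=0$, the corollary asserts $E-1=O$; since $E+O=L(p)$, this is equivalent to the single identity
$$\sum_{j\ge 0}(-1)^j a_j=1 .$$

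Next I would carry the method behind \ref{N++-} through the sign change $1+\zeta-\zeta^2\leadsto 1-\zeta+\zeta^2$. Flipping the coefficients has the effect of giving each domino the weight $-1$ instead of $+1$, so the same expansion that produced $\sum_j a_j=L(p)$ now produces the alternating count $\sum_j(-1)^j a_j$; tracking the leading-coefficient sign and the empty placement through the computation, one finds
$$N_{\Q(p)/\Q}(1-\zeta+\zeta^2)=2-\sum_{j\ge 0}(-1)^j a_j .$$
Hence Theorem~1, which asserts $N_{\Q(p)/\Q}(1-\zeta+\zeta^2)=1$, is exactly equivalent to $\sum_j(-1)^j a_j=1$, and the corollary follows.

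As an independent check (and to see why the two sides match numerically) I would evaluate the alternating count directly. The matching generating polynomial $M(x)=\sum_j a_j x^j$ of $C_p$ equals $\lambda_+^{\,p}+\lambda_-^{\,p}$, where $\lambda_\pm$ are the roots of $\lambda^2-\lambda-x$ (the transfer matrix for tilings of the circle by monomers and dominoes, a domino weighted by $x$); at $x=1$ these are the golden ratio and its conjugate, recovering $M(1)=L(p)$. At $x=-1$ the roots become the primitive sixth roots of unity $e^{\pm i\pi/3}$ --- which are precisely the roots of $X^2-X+1$, the reciprocal polynomial of $r_1$ --- so that $\sum_j(-1)^j a_j=2\cos\frac{\pi p}{3}$, and this equals $2\cos\frac{\pi}{3}=1$ because $p\geqslant 5$ is coprime to $6$ and hence $p\equiv\pm1\pmod 6$. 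The step I expect to be the main obstacle is the transfer in the second paragraph: verifying that the weight-tracking argument of \ref{N++-} really does convert the coefficient sign change into the factor $(-1)^j$ per domino, and pinning down the small normalisation (the empty placement together with the leading-coefficient sign) that yields the constant $2$ rather than a bare equality. The combinatorial recurrence and the evaluation at $-1$ are then routine.
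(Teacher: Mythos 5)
Correct, and essentially the paper's own route: the paper offers no proof beyond the one-line remark that the domino-counting method of \cite{Aksenov} behind \ref{N++-} makes Theorem 1 equivalent to the corollary, and your second paragraph spells out exactly that equivalence (namely $N_{\Q(\zeta)/\Q}(1-\zeta+\zeta^2)=2-\sum_{j}(-1)^j a_j$, so Theorem 1 gives $\sum_{j}(-1)^j a_j=1$, i.e.\ the corollary). Your transfer-matrix evaluation at $x=-1$ is a bonus the paper does not contain: it proves $\sum_{j}(-1)^j a_j=2\cos\frac{p\pi}{3}=1$ directly for $p$ coprime to $6$, which both certifies the sign-tracking formula you flagged as the delicate step and makes the corollary independent of Theorem 1.
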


For example for $p=11$ there are:\\
$1$ way of putting $0$ dominos on a circle of length $11$;\\
$11$ ways of putting $1$ domino;\\
$44$ ways of putting $2$ dominos;\\
$77$ ways of putting $3$ dominos;\\
$55$ ways of putting $4$ dominos;\\
$11$ ways of putting $5$ dominos.

For $p=17$ there are:\\
$1$ way of putting $0$ dominos on a circle of length $17$;\\
$17$ ways of putting $1$ domino;\\
$119$  ways of putting $2$ dominos;\\
$442$  ways of putting $3$ dominos;\\
$935$  ways of putting $4$ dominos;\\
$1122$  ways of putting $5$ dominos;\\
$714$ ways of putting $6$ dominos;\\
$204$ ways of putting $7$ dominos;\\
$17$ ways of putting $8$ dominos.

For the polynomial $r_2$ we get the following result:
\begin{theorem}
Let $p$ be an odd prime. Then,
$$ N_{\Q(\zeta)/\Q}(1-\zeta-\zeta^2)=L(p).$$
\end{theorem}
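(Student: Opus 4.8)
The plan is to recognize the norm as a product over the nontrivial $p$-th roots of unity and to evaluate it as a resultant, then to match the answer to the Lucas numbers through their golden-ratio formula. Writing $\alpha,\beta=\frac{-1\pm\sqrt5}{2}$ for the two roots of $x^2+x-1$, I would factor $1-x-x^2=-(x-\alpha)(x-\beta)$ and observe that $\alpha=-\psi$ and $\beta=-\phi$, where $\phi=\frac{1+\sqrt5}{2}$ and $\psi=\frac{1-\sqrt5}{2}$ are the roots of $x^2-x-1$ generating the Lucas numbers via $L(m)=\phi^m+\psi^m$. This identification is the step that ties the algebraic object to the combinatorial one.

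Next I would compute $N_{\Q(\zeta)/\Q}(1-\zeta-\zeta^2)=\prod_{k=1}^{p-1}(1-\zeta^k-\zeta^{2k})$ by distributing the product across the factorization. Since $p-1$ is even, the sign $(-1)^{p-1}$ contributed by the leading coefficient $-1$ disappears, giving
$$\prod_{k=1}^{p-1}(1-\zeta^k-\zeta^{2k})=\prod_{k=1}^{p-1}(\zeta^k-\alpha)\prod_{k=1}^{p-1}(\zeta^k-\beta).$$
For each root $\gamma\in\{\alpha,\beta\}$ I would invoke the identity $\prod_{k=0}^{p-1}(x-\zeta^k)=x^p-1$, so that $\prod_{k=1}^{p-1}(\gamma-\zeta^k)=\frac{\gamma^p-1}{\gamma-1}$; again because $p-1$ is even this equals $\prod_{k=1}^{p-1}(\zeta^k-\gamma)$. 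Combining the two factors yields the closed form
$$N_{\Q(\zeta)/\Q}(1-\zeta-\zeta^2)=\frac{(\alpha^p-1)(\beta^p-1)}{(\alpha-1)(\beta-1)}.$$

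It then remains to evaluate this expression. From Vieta's relations $\alpha+\beta=-1$ and $\alpha\beta=-1$ I would obtain $(\alpha-1)(\beta-1)=\alpha\beta-(\alpha+\beta)+1=1$, so the denominator drops out. Expanding the numerator as $(\alpha\beta)^p-(\alpha^p+\beta^p)+1$ and using $(\alpha\beta)^p=(-1)^p=-1$ leaves $-(\alpha^p+\beta^p)$. Finally $\alpha^p+\beta^p=(-\psi)^p+(-\phi)^p=-(\phi^p+\psi^p)=-L(p)$ since $p$ is odd, so the numerator equals $L(p)$ and the norm is $L(p)$ as claimed.

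The computation is short; the main thing to get right is the bookkeeping of signs, which occurs in three separate places — the leading coefficient $-1$ of $1-x-x^2$, the reversals $\prod(\zeta^k-\gamma)$ versus $\prod(\gamma-\zeta^k)$, and the parities in $(\alpha\beta)^p$ and $(-\phi)^p$ — all controlled by $p$ being odd. A good consistency check is to run the identical argument on $1+\zeta-\zeta^2$, whose roots are exactly $\phi$ and $\psi$: there the denominator $(\phi-1)(\psi-1)$ equals $-1$ while the numerator equals $-L(p)$, so the two signs cancel and one recovers formula (\ref{N++-}), confirming that the method is sound.
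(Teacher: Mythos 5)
Your proof is correct, and it takes a genuinely different route from the paper's. The paper disposes of the theorem in one line by a symmetry trick: writing $1-\zeta^k-\zeta^{2k}=-\zeta^{2k}\left(1+\zeta^{-k}-\zeta^{-2k}\right)$, noting that $\prod_{k=1}^{p-1}(-\zeta^{2k})=1$ and that $k\mapsto p-k$ permutes the indices, it identifies $N_{\Q(\zeta)/\Q}(1-\zeta-\zeta^2)$ with $N_{\Q(\zeta)/\Q}(1+\zeta-\zeta^2)$ and then quotes formula (\ref{N++-}) from \cite{Aksenov} for the value $L(p)$. You instead evaluate the norm from scratch as a resultant: factoring $1-x-x^2=-(x-\alpha)(x-\beta)$ over the roots of $x^2+x-1$, using $\prod_{k=1}^{p-1}(\gamma-\zeta^k)=(\gamma^p-1)/(\gamma-1)$, and finishing with Vieta's relations and the Binet-type formula $L(m)=\phi^m+\psi^m$. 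Your sign bookkeeping checks out at every stage: $(\alpha-1)(\beta-1)=1$, $(\alpha\beta)^p=-1$, and $\alpha^p+\beta^p=-(\phi^p+\psi^p)=-L(p)$ for odd $p$, giving the norm $L(p)$ exactly. The trade-off is clear: the paper's argument is shorter but is only a reduction, resting on the external result (\ref{N++-}), whereas yours is self-contained and, as your own consistency check shows, re-proves (\ref{N++-}) by the same computation, so it establishes both norms at once; the only input it needs is the standard characterization of the Lucas numbers by $L(m)=\phi^m+\psi^m$.
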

\begin{proof}
We prove this norm to be equal to the norm of $1+\zeta-\zeta^2$. Indeed:
$$\prod_{k=1}^{p-1}(1-\zeta^k-\zeta^{2k})=\prod_{k=1}^{p-1}\left(-\zeta^{2k}(-\zeta^{-2k}+\zeta^{-k}+1)\right)=\prod_{k'=1}^{p-1}(1+\zeta^{k'}-\zeta^{2k'})=L(p). $$ 
\end{proof}

\section*{3. Further questions.}
The results presented here finish the study of norms over $\Q$ relative to quadratic polyninomials (which correspond in terms of Newman's phenomenon to $3$-multiplicative sequences). The case of cubic polynomials seems more challenging.

\end{document}